\numberwithin{equation}{section}
\newtheorem{teo}{Theorem}[section]
\newtheorem{theorem}{Theorem}[section]
\newcommand{\ov}[1]{\overline{#1}}
\newcommand{\ve}{\varepsilon}
\theoremstyle{definition}
\theoremstyle{remark}
\newtheorem{remark}[teo]{Remark}
\begin{document}
	\bibliographystyle{amsplain}

	\title{Smoothness up to the free boundary for  the  $\boldsymbol{p}$-Laplacian  evolution equation \\ and the $\boldsymbol{\alpha}$-Gauss curvature flow}
	
	\author[A. Chau]{Albert Chau}
\address{Department of Mathematics, The University of British Columbia, 1984 Mathematics Road, Vancouver, B.C.,  Canada V6T 1Z2 \\ email: chau@math.ubc.ca} 
\author[B. Weinkove]{Ben Weinkove}
\address{Department of Mathematics, Northwestern University, 2033 Sheridan Road, Evanston, IL 60208, USA. \\ email: weinkove@math.northwestern.edu}

\thanks{Research supported in part by  NSERC grant $\#$327637-06, NSF grant DMS-2348846 and the Simons Foundation}

	\vspace{-20pt}

\begin{abstract} The $p$-Laplacian evolution equation and the $\alpha$-Gauss curvature flow with a flat side are degenerate parabolic equations with evolving free boundaries.  We give proofs of  smooth short-time existence, up to the free boundaries, using a result of the authors on  linear degenerate equations on a fixed domain.  
\end{abstract}
	\maketitle
	
	\section{Introduction}

In a previous work \cite{CW}, the authors showed existence of smooth solutions to linear degenerate parabolic equations on a fixed domain. The key feature is that no boundary conditions are imposed on the solution; instead one imposes certain degeneracy conditions for the operator on the boundary.  The proof used an approach of  Kohn-Nirenberg \cite{KN}. This was used to give a proof of a  short-time existence result for the porous medium equation  $u_t = \Delta(u^m)$ for $1<m\le 2$ (cf. \cite{DH98, Koch}), giving smoothness up to the boundary of the set $\{ u>0\}$.

In this paper, we build on \cite{CW} by showing that the same method can be applied to two other nonlinear degenerate parabolic equations:  the $p$-Laplacian  evolution equation  and the $\alpha$-Gauss curvature flow with a flat side.  This shows the versatility of this method, which we believe should apply to other settings too. With this in mind, we carry out our calculations in a way that they can easily be adapted to other equations.

While the smooth short-time existence results we give below  are not entirely new, we provide here a different and unified perspective.  The method is straightforward and direct:  by a standard change of variables, one can reduce the problem on a moving domain to an equation on a fixed domain with no boundary conditions.  The linearization of this equation is a linear degenerate parabolic equation.  One only needs to check that the coefficients of this linearized equation satisfy certain conditions on the boundary including the key condition of Fichera (see (A) and (B) in Section \ref{SectionCW} below).  Then one can apply the result of \cite{CW} which gives short-time smooth existence using  the Nash-Moser inverse function theorem.  Since the existence of unique weak solutions in these settings is standard, we state our results in the form of regularity theorems.  

Previous approaches in the literature instead use specific model equations and Schauder estimates \cite{DH98, DH,  HWZ2, JL, Koch, KLR, LPV}.

We now describe our two main results.  The $p$-Laplacian evolution equation is the degenerate parabolic equation for $u \ge 0$,
	\begin{equation} \label{pLap}
		u_t = \Delta_p u := \nabla \cdot (|\nabla u|^{p-2} \nabla u), \quad p>2.
	\end{equation}
It can be regarded as a way of ``non-linearizing'' the heat flow, which corresponds to the case $p=2$.   It is used to model diffusion processes in non-Newtonian fluids, turbulent flows in porous media, and image processing.  
	
	Note that
	\begin{equation} \label{Deltap}
		\begin{split}
			\Delta_p u  
			= {} & | \nabla u|^{p-2} \Delta u + (p-2) |\nabla u|^{p-4} u_i u_j u_{ij}.
		\end{split}
	\end{equation}
It is convenient to work with the \emph{pressure}
	$$g = \left( \frac{p-1}{p-2} \right) u^{\frac{p-2}{p-1}},$$
	which satisfies
	\begin{equation} \label{plapg}
		g_t = \left( \frac{p-2}{p-1} \right) g \Delta_p g + |\nabla g|^p.
	\end{equation}

	We consider the case when the initial data $g_0$ is  a smooth positive function on a bounded domain $\Omega_0 \subset \mathbb{R}^n$ and vanishes on the boundary $\partial \Omega_0$, which we assume to be smooth.  In addition we make the non-degeneracy assumption
	\begin{equation} \label{nondegen}
		g_0 + (\nabla g_0)^2 \ge c>0, \quad \textrm{on } \ov{\Omega}_0,
	\end{equation}
	for a uniform constant $c>0$ (equivalently, $|\nabla g_0| \neq 0$ on $\partial \Omega_0$).  There exists a unique continuous weak solution $g=g(x,t)$ to (\ref{plapg}) on $\mathbb{R}^n \times [0,\infty)$  (see \cite{DBH} and also \cite{DB} for the definition of a weak solution and proof of its existence).   
	  We write $$\Omega_t = \{ x \in \mathbb{R}^n \ | \ g(x,t)>0\}.$$   Our main result in this case proves that the free boundaries $\partial \Omega_t$ are smooth and the solution is smooth in a neighborhood of the boundary.  
	 	
	\begin{theorem} \label{thmplap}
	There exists $T>0$ such that for $t\in [0,T]$, the sets $\Omega_t$ have smooth boundaries $\partial \Omega_t$.  The function $g=g(x,t)$ on $\displaystyle{\cup_{t \in [0,T]} \ov{\Omega}_t \times \{ t\} }$ is smooth in a neighborhood of the boundary $\displaystyle{\cup_{t \in [0,T]} \partial \Omega_t \times \{ t\} }$, satisfying (\ref{plapg}) there in the classical sense.
\end{theorem}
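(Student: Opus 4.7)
The plan is to recast the moving-boundary problem for $g$ as a nonlinear degenerate parabolic equation on a \emph{fixed} domain with no boundary condition at the degenerate face, so that the framework of \cite{CW} applies. I would work locally near a point $x_0 \in \partial \Omega_0$; by the non-degeneracy assumption (\ref{nondegen}), after a smooth change of Euclidean coordinates we may arrange $\Omega_0 = \{x_n > 0\}$ near $x_0$ and $g_0(x',x_n) = x_n$. Since $\partial g/\partial x_n > 0$ persists near the free boundary for small $t$ by continuity, the partial Legendre (hodograph) transform defined implicitly by $g(x', h(x', s, t), t) = s$ is available, producing an unknown $h$ on the fixed half-space $\{s \ge 0\}$ with $h(\cdot, \cdot, 0) = s$. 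The face $\{s = 0\}$ corresponds at each time to the free boundary $\partial \Omega_t$ via $x_n = h(x', 0, t)$, and no boundary condition is prescribed on $h$ there.

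Implicit differentiation of $g(x', h, t) = s$ yields $g_{x_n} = 1/h_s$, $g_{x_i} = -h_{x_i}/h_s$, $g_t = -h_t/h_s$, together with analogous formulas for second derivatives of $g$ in terms of $h$ and its first and second derivatives. Substituting these into (\ref{plapg}) and using $g = s$ converts it into a quasilinear second-order equation $h_t = F(s, \nabla' h, h_s, D^2 h)$ on $\{s \ge 0\}$. Crucially, the prefactor $g = s$ kills the diffusive contribution $\frac{p-2}{p-1} g \Delta_p g$ at $s = 0$, so the equation there reduces to the first-order relation $h_t = -(1 + |\nabla' h|^2)^{p/2}/h_s^{p-1}$, which is the expected law of motion for the free boundary and which determines $h|_{s=0}$ from the equation itself.

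Next I would linearize this $h$-equation at the reference profile $h = s$ and verify the degeneracy and Fichera-type conditions (A) and (B) of Section \ref{SectionCW}: that the second-order part degenerates in a controlled way at $s=0$, and that the coefficient of $\partial_s$ at $s = 0$, aggregating the chain-rule contributions of $|\nabla g|^p$ and $\Delta_p g$, has the correct sign to render a Dirichlet condition unnecessary. This sign check is the main obstacle: the $p$-th power in $|\nabla g|^p$ combined with the quasilinear $\Delta_p$ structure makes the bookkeeping considerably more intricate than in the porous medium case treated in \cite{CW}, and the verification must go through uniformly in $p > 2$.

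Once (A) and (B) are established, the result of \cite{CW}, together with the built-in Nash-Moser iteration, yields a smooth short-time solution $h$ on $\{s \ge 0\}$. Covering $\partial \Omega_0$ by finitely many such charts and gluing with standard quasilinear parabolic theory on the non-degenerate interior region $\{g_0 \ge c/2\}$ produces, after inverting the hodograph, a $g$ that is smooth in a neighborhood of $\cup_{t \in [0,T]} \partial \Omega_t \times \{t\}$ and classical there. Uniqueness of continuous weak solutions from \cite{DBH} identifies this smooth $g$ with the given weak solution, completing the proof of Theorem \ref{thmplap}.
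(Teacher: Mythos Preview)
Your overall strategy---straighten the free boundary to a fixed domain, check that the linearized operator satisfies the degeneracy condition (A) and the Fichera condition (B) of Section~\ref{SectionCW}, invoke the result of \cite{CW}, and then identify with the weak solution via \cite{DBH}---is exactly the paper's. The difference lies in the straightening. The paper does \emph{not} use a hodograph transform in local charts; instead it defines a single global map on an annular neighborhood of $\partial\Omega_0$ by intersecting line segments in $\mathbb{R}^{n+1}$, based at the graph of $v=g_0$ and pointing along $V=(-\nabla v,v)$, with the evolving graph of $g$ (equations \eqref{V}--\eqref{defnPhi}). This produces a function $h$ on a fixed collar of $\partial\Omega_0$ with $h(\cdot,0)=0$, avoiding any patching of local charts. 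The interior is then handled not by gluing with standard parabolic theory but by extending the operator across $\Omega_0^{(\eta)}$ with a cutoff and a forcing term manufactured from the known smooth interior solution, so that Theorem~\ref{CWthm} can be applied on all of $\overline\Omega_0$ at once.

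Two points in your sketch deserve attention. First, arranging both $\Omega_0=\{x_n>0\}$ and $g_0(x',x_n)=x_n$ simultaneously requires a genuine diffeomorphism (using $g_0$ as a coordinate), not a Euclidean motion; under such a change the $p$-Laplacian picks up variable coefficients. This is not fatal---the Fichera quantity $(b^i-\partial_j a^{ij})\nu_i$ is invariant once (A) holds---but the transformed operator is no longer the clean \eqref{plapg}, and your computation must track the Jacobian factors. The paper sidesteps this by keeping the original Euclidean coordinates and absorbing the geometry into the map $\Phi$. Second, you correctly identify the Fichera sign check as the crux but do not carry it out. The paper's Theorem~\ref{thmtechnical} packages the linearization calculation for a general $F$, and the verification for the $p$-Laplacian then collapses (equations \eqref{le1}--\eqref{le3}) to the single line
\[
(b^i-\partial_j a^{ij})\nu_i \;=\; p\,|g_n|^{p-2}g_n \;-\; \frac{|g_n|^p}{v_n} \;-\; (p-2)|g_n|^{p-2}g_n \;=\; |g_n|^{p-2}g_n \;<\;0,
\]
at $t=0$, which is much cleaner than your description suggests. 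In your hodograph variables the analogous computation should also close, but you should actually do it rather than flag it as an obstacle.
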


Existing regularity results were established using different methods.
Under weaker hypotheses on the initial data, Lee-Petrosyan-V\'azquez \cite{LPV}  showed that $g$ is smooth in a neighborhood of the boundary for $t>0$, using estimates of Koch \cite{Koch}.  Earlier work had established Lipschitz regularity for solutions \cite{CK, ZY} and $C^{1, \gamma}$ regularity of the free boundary \cite{Ko}.

	Next we consider the $\alpha$-Gauss curvature flow of convex hypersurfaces in $\mathbb{R}^{n+1}$.  This is a subject with a long history \cite{A, ACGL, BCD, Chow, Tso}.  We consider the case where the initial hypersurface has a flat side that persists.  
	
	Let $\Sigma$ be a closed convex hypersurface in $\mathbb{R}^{n+1}$ parametrized by a map $X_0: S^n \rightarrow \mathbb{R}^{n+1}$.  Then the $\alpha$-Gauss curvature flow starting at $\Sigma$ is a family of maps $X(t) : S^n \rightarrow \mathbb{R}^{n+1}$ satisfying
	\begin{equation} \label{aGCF}
	\frac{\partial X}{\partial t} = - K^{\alpha} \nu, \quad X(0)=X_0,
	\end{equation}
	where $K$ is the Gauss curvature and $\nu$ is the outward pointing unit normal.
	We make the assumption that $\Sigma$ has a flat side whose boundary is strongly convex, meaning that the principal curvatures are strictly positive.  Write 
	$\Sigma = \Sigma_1\cup \Sigma_2$ where the open strongly convex set $\Sigma_1$ is the interior of the flat side which we assume lies in the plane $x_{n+1}=0$ and contains the origin.  We assume that $\Sigma$ lies in the half space $x_{n+1} \ge 0$.

The case of the $\alpha$-Gauss curvature flow with a flat side was first studied by Hamilton \cite{Hamilton}.  
We  make the assumption 
	$$\alpha>1/n,$$
	which is necessary for the flat side to persist for some positive time \cite{Andrews, Hamilton}.  There is a unique  $C^{1, \gamma}$ family of convex hypersurfaces $\Sigma^t$ solving the $\alpha$-Gauss curvature flow (\ref{aGCF}) \cite{DS} which are smooth in the strictly convex part \cite{Chow, Tso}.
	
	Fix a neighborhood $W_0$ of $\ov{\Sigma}_1$ in $\mathbb{R}^n$ (identifying $\mathbb{R}^n$ with $x_{n+1}=0$), so that in $\ov{W}_0$, the part of the hypersurface $\Sigma^t$ near $x_{n+1}=0$ can be written as a graph $x_{n+1} = u(x, t),$ for $x=(x_1, \ldots, x_n)$ over $W_0$.  It is convenient to make a change of variables and consider
	$$g =\left(  \frac{\sigma +1}{\sigma} u \right)^{\frac{\sigma}{\sigma+1}}, \quad \sigma:= n-\frac{1}{\alpha}>0, \quad \frac{2}{\sigma} \in \mathbb{Z}^+,$$
	where $\mathbb{Z}^+$ denotes the set of positive integers. 
 As pointed out in \cite{HWZ2}, the condition $2/\sigma\in \mathbb{Z}^+$ is necessary in general for smoothness up to the boundary, and in particular under the following non-degeneracy assumption at the boundary.
	We make the assumption that the initial data $g_0(x):=g(x,0)$ is smooth on $\ov{W}_0 \setminus \Sigma_1$ and satisfies
	$$|\nabla g_0| \ge c> 0, \quad \textrm{on } \partial \Sigma_1.$$

With these assumptions, we prove a smooth short-time existence result, which gives smoothness of the free boundaries $\partial \Sigma_1^t$ and of the graphs of $g$ in a neighborhood of the free boundary.  This gives a different proof of a result of Huang-Wang-Zhou \cite{HWZ2}.

\begin{theorem} \label{thmGCF}
There exists $T>0$ such that for $t \in [0,T]$, the solution $\Sigma^t$ of (\ref{aGCF}) can be written as $\Sigma^t = \Sigma_1^t \cup \Sigma_2^t$, where the flat side $\Sigma_1^t$ is a domain in $x_{n+1}=0$.  The domains $\Sigma_1^t \subset \mathbb{R}^n$ have smooth boundaries $\partial \Sigma_1^t$ which vary smoothly with $t$.  The function $g=g(x,t)$ on $\displaystyle{\cup_{t\in[0,T]} (\ov{W}_0 \setminus \Sigma_1^t )\times \{t \}}$ is smooth. 
\end{theorem}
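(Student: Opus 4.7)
The plan is to parallel the approach for Theorem \ref{thmplap}: derive a fully nonlinear degenerate parabolic equation for the pressure $g$, transfer it to a fixed domain via a diffeomorphism that tracks the free boundary, verify the structural condition (A) and the Fichera condition (B) of Section \ref{SectionCW} for the linearization at $t=0$, and then appeal to \cite{CW} and the Nash--Moser inverse function theorem to get smooth short-time existence.

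\emph{Equation for $g$.} Writing $\Sigma_2^t$ near $\partial \Sigma_1^t$ as a graph $x_{n+1}=u(x,t)$ over $W_0$, the flow (\ref{aGCF}) becomes
$$u_t = -\frac{(\det D^2 u)^\alpha}{(1+|\nabla u|^2)^{\alpha(n+2)/2 - 1/2}}.$$
The substitution $u = (\sigma/(\sigma+1))g^{(\sigma+1)/\sigma}$ gives $u_t = g^{1/\sigma}g_t$ and $D^2 u = g^{1/\sigma - 1}A$ with $A := g\,D^2 g + \sigma^{-1}\nabla g\otimes \nabla g$. The exponent $\sigma = n-1/\alpha$ is tuned precisely so the singular powers of $g$ cancel. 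Expanding $\det A$ along the direction $\nabla g$ gives
$$\det A = g^{n-1}\bigl[\,\sigma^{-1}|\nabla g|^2\det \tilde B + g\cdot R\,\bigr],$$
where $\tilde B$ is the tangential Hessian of $g$ and $R$ is smooth; this is nonzero on $\partial \Sigma_1$ by $|\nabla g_0|\ge c>0$ together with the strict convexity of $\partial \Sigma_1$ (which forces $\det \tilde B > 0$ there). One obtains an equation $g_t = -\mathcal F(g,\nabla g,D^2 g)$ with $\mathcal F$ smooth up to $\{g=0\}$, whose dependence on $D^2 g$ is through a factor $g\cdot\partial_\nu^2 g$ plus tangential second derivatives with nondegenerate coefficients --- exactly the structure treated in \cite{CW}.

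\emph{Fixed-domain reformulation and conclusion.} Using $|\nabla g_0|\ge c>0$ and the implicit function theorem, I would construct a smooth family of diffeomorphisms $\Phi_t:\ov{W}_0\setminus\Sigma_1\to\ov{W}_0\setminus \Sigma_1^t$ with $\Phi_0=\mathrm{id}$ and $\Phi_t(\partial \Sigma_1)=\partial\Sigma_1^t$; then $\tilde g(y,t):= g(\Phi_t(y),t)$ solves a fully nonlinear degenerate equation on the fixed domain $\ov{W}_0\setminus \Sigma_1$ with no imposed boundary condition, the vanishing $\tilde g=0$ on $\partial \Sigma_1$ being preserved automatically by the degeneracy. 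Linearizing at $t=0$ about $g_0$ produces an operator
$$\mathcal L\phi = g_0\,a^{ij}\partial_i\partial_j\phi + b^i\partial_i \phi + c\,\phi,$$
for which condition (A) --- uniform ellipticity of $(a^{ij})$ tangentially up to the boundary and pointwise in the interior --- follows from the nondegeneracy of $\det \tilde B$ and $|\nabla g_0|$, and condition (B) --- the correct sign of $b^i\nu_i$ at $\partial \Sigma_1$ --- is obtained by differentiating $-\mathcal F$ with respect to $\partial_\nu g$: to leading order $\partial_{\partial_\nu g}\mathcal F|_{\partial \Sigma_1}$ is a positive multiple of $\alpha \sigma^{-\alpha}|\nabla g_0|^{2\alpha-1}(\det\tilde B)^\alpha$, and the Fichera sign works out with the right choice of normal. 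The main result of \cite{CW} then yields a smooth short-time $\tilde g$; pushing forward by $\Phi_t^{-1}$ gives smoothness of $g$ on $\cup_{t\in[0,T]}(\ov{W}_0\setminus \Sigma_1^t)\times\{t\}$, and the free boundaries $\partial \Sigma_1^t = \{g(\cdot,t)=0\}$ are smooth by the implicit function theorem (using $|\nabla g|\neq 0$ there). Uniqueness of the $C^{1,\gamma}$ solution \cite{DS} identifies this smooth $g$ with the full $\alpha$-GCF.

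\emph{Main obstacle.} The principal difficulty is verifying condition (B): the equation is fully nonlinear and not in divergence form, so the sign of the Fichera coefficient must be extracted directly from the linearization of $(\det A)^\alpha$ in the direction $\nabla g_0$, using the explicit factoring $\det A = g^{n-1}[\sigma^{-1}|\nabla g|^2\det \tilde B + O(g)]$ together with strict convexity of $\partial \Sigma_1$, and then tracked carefully through the pullback by $\Phi_t$. Once this algebraic sign check succeeds at $t=0$, the remainder is the machinery of \cite{CW}.
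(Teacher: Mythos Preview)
Your overall strategy matches the paper's: transform to a fixed domain, verify (A) and (B) for the linearization, and invoke Theorem~\ref{CWthm}. However, there are two genuine gaps in the execution.

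\textbf{The fixed-domain reformulation is circular.} You propose to build diffeomorphisms $\Phi_t:\ov{W}_0\setminus\Sigma_1\to\ov{W}_0\setminus\Sigma_1^t$ and then solve for $\tilde g=g\circ\Phi_t$. But $\Phi_t$ is determined by the free boundary $\partial\Sigma_1^t=\{g(\cdot,t)=0\}$, which is part of the unknown; a~priori the weak solution is only $C^{1,\gamma}$, so you cannot take $\Phi_t$ to be smooth at the outset. The paper avoids this by making the diffeomorphism itself the unknown: it fixes an auxiliary concave function $v$ on the \emph{interior} domain $\Omega_0:=\Sigma_1$, defines a vector field $V=(-\nabla v,v)$, and lets $h(x,t)$ be the parameter at which the ray $(x,v(x))+hV(x)$ meets the graph of $g$. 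The single scalar $h$ simultaneously encodes the moving boundary (via $\Phi(x,t)=x-h\nabla v$) and the value of $g$ (via $(1+h)v=g\circ\Phi$), and it is $h$, not $\tilde g$, that satisfies an autonomous equation $h_t=G(D^2h,Dh,h,x)$ on a fixed domain to which Theorem~\ref{CWthm} applies. Your setup never produces such an autonomous equation.

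\textbf{The Fichera check is incomplete.} Condition (B) is $(b^i-\partial_j a^{ij})\nu_i<0$, not merely a sign on $b^i\nu_i$, and after the change of variables $b^n$ is \emph{not} just $\partial F/\partial p_n$: Theorem~\ref{thmtechnical}(iii),(iv) show that $b^n$ picks up contributions $\sum_{i,j<n}\frac{\partial F}{\partial A_{ij}}\Psi^n_n v_n h_{ij}$ and $-F/v_n$, and the full Fichera quantity also involves $\partial_n\frac{\partial F}{\partial A_{nn}}$ and $\partial_i\frac{\partial F}{\partial A_{in}}$. In the paper these are all computed explicitly and combine (using $\sigma=n-1/\alpha$ and the strict convexity of $\partial\Sigma_1$) to the negative quantity $\frac{\alpha(2n-1)}{\sigma^{\alpha}}(g_{11}\cdots g_{n-1,n-1})^{\alpha}(g_n^2)^{\alpha-1}(\Psi^n_n)^2 v_n$. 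Your single term $\partial_{\partial_\nu g}\mathcal F$ is not enough, and your proposed form $\mathcal L\phi=g_0\,a^{ij}\phi_{ij}+\cdots$ is also inaccurate: for the $\alpha$-Gauss curvature flow only the normal--normal coefficient $a^{nn}$ vanishes on the boundary, while the tangential block $\frac{\partial F}{\partial A_{ij}}$ ($i,j<n$) stays positive there. Finally, there is a sign slip: the graph equation is $u_t=+\dfrac{(\det D^2u)^{\alpha}}{(1+|Du|^2)^{((n+2)\alpha-1)/2}}$, not with a minus sign.
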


In the case $\alpha=1$ (Gauss curvature flow) and $n=2$, smoothness at the boundary was first obtained for $t>0$, under weaker hypotheses of the initial data, by Daskalopoulos-Hamilton \cite{DH} using techniques from \cite{DH98}. This was extended to to general $n$ and $\alpha$ satisfying the above conditions by Huang-Wang-Zhou \cite{HWZ2} (see also \cite{HWZ, KLR}).

The outline of the paper is as follows.  In Section \ref{SectionCW} we briefly describe the authors' previous result  \cite{CW} on smooth short-time existence for degenerate equations on a fixed domain.  In Section \ref{Sectionlinearization}, we describe the change of variables arguments for the cases of the $p$-Laplacian evolution equation and the $\alpha$-Gauss curvature flow, and then carry out a rather general computation for the linearization of these equations.  In Sections \ref{sectionp} and \ref{SectionGCF} we complete the proofs of Theorems \ref{thmplap} and \ref{thmGCF}, respectively.

	\section{Short time existence and boundary conditions for nonlinear equations} \label{SectionCW}
	
	In this section we describe the result we need from \cite{CW} on the existence of short time solutions to nonlinear equations on a fixed domains under certain conditions on the coefficients at the boundary.  
	
	Let $\Omega_0$ be a bounded domain in $\mathbb{R}^n$ with smooth boundary $\partial \Omega_0$.  For a small $T>0$ we consider equations of the form
		\begin{equation} \label{hG}
	\begin{split}
	\frac{\partial h}{\partial t} = {} &  G(D^2 h, Dh, h, x), \quad \textrm{in } \ov{\Omega}_0 \times [0,T] \\
	h(x,0) = {} & h_0(x), \quad \textrm{for } x \in \ov{\Omega}_0,
	\end{split}
	\end{equation}
	where $G$ is a smooth nonlinear operator, and initial data $h_0 \in C^{\infty}(\ov{\Omega}_0)$.  The result of \cite{CW} treats the special case when $h_0=0$ but the same argument applies here as we now describe.
	
	 The space $C^{\infty}(\ov{\Omega}_0 \times [0,T])$
 is a Frechet space equipped with seminorms $$\| f \|_{\ell} = \max_{2k+|\mathbf{a}| \le \ell} \, \, \max_{\ov{\Omega}_0 \times [0,T]} |D_x^{\mathbf{a}} D_t^k f|.$$  These define a metric on $C^{\infty}(\ov{\Omega}_0 \times [0,T])$ by the formula
 \begin{equation} \label{d}
 d(f,g) = \sum_{k=0}^{\infty} 2^{-k} \frac{\| f-g \|_k}{1+\| f-g\|_k}.
 \end{equation}
 Consider the linear space 
 $$\mathcal{B} = \{ f \in C^{\infty}(\ov{\Omega}_0 \times [0,T]) \ | \ (D^k_t f)(x,0)=0, \ \textrm{for all } k\ge 0 \},$$
and  let $\mathcal{D}$ be a ball of small radius $\sigma>0$ centered at $0 \in \mathcal{B}$, with respect to the metric (\ref{d}).
Let $\overline{h} \in C^{\infty}(\ov{\Omega}_0 \times [0,T])$ be a ``formal solution'' of 
$\Phi(h):= h_t - G(D^2 h, Dh,h, x)=0$  at $t=0$, meaning that it solves 
$D_t^j \Phi(\ov{h})(x,0)=0$ for all $x\in \Omega_0$,  
$j=0,1,2,\ldots$ and $\ov{h}(x,0)=h_0(x)$.  Its Taylor series expansion in $t$ is determined by the equation  (as in \cite[Section 4]{CW}) and the existence of $\ov{h}$ follows from the existence of smooth functions with a given Taylor series.

	For $h = \ov{h} + \zeta$ with $\zeta \in \mathcal{D}$, let $DG_h$ be the linearization of $G$ at $h$, which we write as
	$$DG_h (w) = a^{ij} w_{ij} + b^i w_i + f w,$$
	for coefficients $a^{ij}$, $b^i$ and $f$ which are smooth functions of $h$, $Dh$, $D^2h$ and $x$.  We may and do assume that $(a^{ij})$ is symmetric.  We assume that for all $h = \ov{h} + \zeta$, with $\zeta \in \mathcal{D}$, the following conditions hold.
	
	First, we have
ellipticity in the interior of the domain, namely:
	\begin{equation} \tag{E}
	(a^{ij}) >0, \quad \textrm{on } \Omega_0
	\end{equation}
	 We assume degeneracy of $(a^{ij})$ on the boundary, in the sense that:
	 	 \begin{equation} \tag{A} 
a^{ij}\nu_i\nu_j  =0, \quad  \textrm{ on }\partial \Omega_0,
\end{equation}
for $\nu=\nu(x)$ the outward pointing normal at $x \in \partial \Omega_0$.  
We also impose the (strict) Fichera \cite{Fi56} condition:
\begin{equation} \tag{B} 
(b^i - \partial_j a^{ij})\nu_i < 0,  \quad  \textrm{ on }\partial \Omega_0.
\end{equation}
Then the results of \cite{CW} imply the existence of solutions to (\ref{hG}) for a short time:

\begin{theorem} \label{CWthm}
Assuming the above, there exists $\ve>0$ and $h(x,t) \in C^{\infty}(\ov{\Omega}_0 \times [0,\ve])$ solving
\[\begin{split}
	\frac{\partial h}{\partial t} = {} &  G(D^2 h, Dh, h, x), \quad \textrm{in } \ov{\Omega}_0 \times [0,\ve] \\
	h(x,0) = {} & h_0(x), \quad \textrm{for } x \in \ov{\Omega}_0.
	\end{split}\]
\end{theorem}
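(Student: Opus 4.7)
The plan is to translate the nonlinear Cauchy problem to one with zero initial data and then invoke a Nash--Moser argument built on the linear theory of \cite{CW}. Write $h = \ov{h} + \zeta$; the equation $\Phi(h) = 0$ becomes
\[
\Psi(\zeta) := \partial_t(\ov{h}+\zeta) - G(D^2(\ov{h}+\zeta), D(\ov{h}+\zeta), \ov{h}+\zeta, x) = 0,
\]
where we look for $\zeta$ in the Frechet space $\mathcal{B}$ of functions with every $t$-derivative vanishing at $t=0$. Any smooth solution $h$ has $h(x,0) = h_0(x) = \ov{h}(x,0)$, and its higher time derivatives at $t=0$ are recursively determined by the equation (by differentiating $h_t = G$ in $t$) and therefore match those of $\ov{h}$; so the shift by $\ov{h}$ forces $\zeta \in \mathcal{B}$. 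Moreover, because $\ov{h}$ is a formal solution, $\Psi(0) \in \mathcal{B}$.

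Next I would compute $D\Psi_\zeta(w) = w_t - a^{ij}w_{ij} - b^i w_i - f w$, where the coefficients are the smooth functions of $\ov{h}+\zeta$, $D(\ov{h}+\zeta)$, $D^2(\ov{h}+\zeta)$ and $x$ appearing in $DG_{\ov{h}+\zeta}$. The hypotheses (E), (A) and (B), assumed to hold for every $\zeta \in \mathcal{D}$, supply precisely the interior ellipticity, boundary degeneracy and strict Fichera sign needed to apply the main linear result of \cite{CW}. That result produces, for any $\eta \in \mathcal{B}$, a solution $w \in \mathcal{B}$ of $D\Psi_\zeta(w) = \eta$, and in fact yields a right inverse $L_\zeta : \mathcal{B} \to \mathcal{B}$ satisfying tame estimates in the seminorms $\| \cdot \|_\ell$.

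Equipped with a tame family of right inverses, the Nash--Moser inverse function theorem in Hamilton's form guarantees that $\Psi$ maps a neighborhood of $0$ in $\mathcal{B}$ onto a neighborhood of $\Psi(0)$. To arrange $\Psi(0)$ to lie in this neighborhood, I would shorten the time interval from $[0,T]$ to $[0,\ve]$: since $\Psi(0)$ already belongs to $\mathcal{B}$, the seminorms $\|\Psi(0)\|_\ell$ on $\ov{\Omega}_0 \times [0,\ve]$ tend to $0$ as $\ve \to 0$ for each fixed $\ell$. A preimage $\zeta \in \mathcal{D}$ of $0$ under the restricted $\Psi$ then provides the required smooth solution $h = \ov{h} + \zeta$ on $\ov{\Omega}_0 \times [0,\ve]$.

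The main obstacle is the tameness of $L_\zeta$ uniformly in $\zeta \in \mathcal{D}$: one needs each $\| L_\zeta \eta \|_\ell$ to be controlled by $\| \eta \|_{\ell + r}$ with $r$ independent of $\zeta$, and with constants depending on only finitely many seminorms of $\zeta$. This is the substance of the linear theorem in \cite{CW}; the only new wrinkle beyond the $h_0 = 0$ case treated there is the presence of coefficients of the form $a^{ij}(\ov{h}+\zeta, \ldots)$ rather than $a^{ij}(\zeta,\ldots)$, which by inspection of the Kohn--Nirenberg-style estimates only introduces harmless dependence on $\ov{h}$ into the constants.
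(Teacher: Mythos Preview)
Your proposal is correct and follows essentially the same approach the paper indicates: the paper does not give a proof here but simply states that the theorem is a consequence of \cite{CW}, ``proved using the existence of smooth solutions for the linearized equation, together with the Nash-Moser inverse function theorem.'' Your outline --- shifting by the formal solution $\ov{h}$, solving the linearized equation via the degenerate linear theory of \cite{CW} under (E), (A), (B), obtaining tame right inverses, and shrinking the time interval so that $\Psi(0)$ falls into the range of the Nash--Moser theorem --- is precisely that scheme, and your remark that the extension from $h_0=0$ to general $h_0$ is cosmetic matches the paper's own comment that ``the same argument applies here.''
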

	
	This is proved using the existence of smooth solutions for the linearized equation, together with the Nash-Moser inverse function theorem.
	
	\begin{remark} This result also holds with the strict inequality $<$ in (B) replaced by $\le$, at the expense of requiring the condition 
	\begin{equation} \tag{A$_2$} 
	 (\partial_k a^{ij})\nu_k \nu_{i} \nu_j <0, \quad  \textrm{ on }\partial \Omega_0,
\end{equation}
see \cite[Remark 1.1(i)]{CW}.  Either approach works for the main results of this paper, since it is easily seen that (A$_2$) holds for our equations.
\end{remark}
	
	\section{Change of variables and linearization} \label{Sectionlinearization}

	In the two cases, the $p$-Laplacian evolution equation and the $\alpha$-Gauss curvature flow with a flat side, we will describe how we transform the equation to a fixed domain.  In each case we will assume we have a smooth solution of the original equation and show how it gives rise to a smooth solution of a different equation on the fixed domain.  This process is reversible.  Similar transformations are carried out in \cite{DH, Hanz, Koch}, for example. 
	
	\subsection{The $p$-Laplacian evolution equation} \label{subsectionplap}

	As in the introduction, $\Omega_0$ is a fixed domain in $\mathbb{R}^n$ with smooth boundary.  Fix $\delta>0$ and write $\Omega_0^{(\delta)}$ for the open set of points in $x \in \Omega_0$ such that $d(x, \partial \Omega_0)>\delta$.  Suppose that for $t \in [0,T]$ we have bounded open sets $\Omega_t \subset \mathbb{R}^n$ with smooth, and smoothly varying, boundaries $\partial \Omega_t$.
	Let $g=g(x,t)$ be nonnegative smooth functions satisfying the nonlinear equation, for $F$ defined by the right hand side of (\ref{plapg}),
	\begin{equation} \label{gF}
	g_t = F(\nabla^2 g, \nabla g, g),
	\end{equation}
	on $\bigcup_{t \in [0,T]} U_t \times \{ t\}$, for moving sets $U_t = \ov{\Omega}_t \setminus \Omega_0^{(\delta)}$.

It follows immediately from the formula (\ref{plapg}) that
 $F=F( (A_{ij}), p_i, u)$ 
	 is smooth with $\frac{\partial F}{\partial A_{ij}} \ge 0$ for $x \in \ov{\Omega}_t$ and  $\frac{\partial F}{\partial A_{ij}} > 0$ on the open set $\Omega_t$. We also have the following degeneracy of the operator at points of $\partial \Omega_t$:
	 \begin{equation} \label{deg}
\frac{\partial F}{\partial A_{ij}} = 0, \quad i,j=1, \ldots, n.
	  \end{equation}

We will now transform (\ref{gF}) to an equation for a function $h$ on a fixed neighborhood of $\partial \Omega_0$ in $\ov{\Omega}_0$.

Write $v:= g_0$, so that 
	$v: \ov{\Omega}_0 \rightarrow [0,\infty)$
	is a fixed smooth function on the set $\ov{\Omega}_0$, which vanishes on $\partial \Omega_0$ and is positive on $\Omega_0$ and such that the 
	nondegeneracy condition 
	\begin{equation} \label{nondegen0}
		v + (\nabla v)^2 \ge c>0, \quad \textrm{on } \ov{\Omega}_0,
	\end{equation}
	holds for a uniform constant $c>0$.

	Define a vector-valued map $V: \Omega_0  \rightarrow \mathbb{R}^{n}$ by
	\begin{equation} \label{V}
	V(x):= (-\nabla v (x), v(x)),
	\end{equation}
	which extends to a smooth map on $\ov{\Omega}_0$.	
	
	For $x \in \Omega_0$, the vector $V(x)$ is transverse to the graph $x_{n+1}=v(x)$ in $\mathbb{R}^{n+1}$ at the point $(x, v_0(x))$.  Note that $V(x)$ is parallel to the $x_{n+1}=0$ hyperplane for $x \in \partial \Omega_0$.  Moreover, by (\ref{nondegen0}), we have $|V| \ge c'>0$.  
	
For points $x \in \ov{\Omega}_0$ which are sufficiently close to the boundary $\partial \Omega_0$,
	the parametrized line segment $$h \mapsto (x, v(x)) + hV(x), \quad h\in \mathbb{R},$$ intersects the graph $x_{n+1}=g(x,t)$ for $t \in [0,T]$ at a unique value of $h$, where we are shrinking $T>0$ if necessary.  This defines a function $h=h(x,t)$.  
	The function $h(x,t)$ satisfies
	\begin{equation} \label{h0}
		(1+  h(x,t) )v(x) = g(x-h(x,t) \nabla v(x), t),
	\end{equation}
	and indeed (\ref{h0})  can be used to define $h$.  We define a  time-dependent diffeomorphism,
	\begin{equation} \label{defnPhi}
	\Phi (x,t) = x- h(x,t) \nabla v(x).
	\end{equation}
	on $\Omega_0 \setminus \Omega_0^{(\eta)}$ for some $\eta>0$.
	Write $\Psi$ for the inverse of the diffeomorphism defined by (\ref{defnPhi}), so that $\Psi(\Phi(x,t),t) = x$.
	
	Now $h(x,t)$, for $t \in [0,T]$ satisfies a second order nonlinear equation 
		\begin{equation} \label{dhdt1}
	\frac{\partial h}{\partial t} = G(D^2h, Dh, h,x),
	\end{equation}
	for $x \in \ov{\Omega}_0 \setminus \Omega_0^{(\eta)},$ for a small $\eta>0$, and satisfies $h(x,0)=0$.
	
	In the above, we assumed \emph{a priori} that $g$ was smooth, but under this transformation, if $g$ is only continuous, it maps to a continuous function $h$.
	
	\subsection{The $\alpha$-Gauss curvature flow with a flat side} We consider the $\alpha$-Gauss curvature flow of convex hypersurfaces in $\mathbb{R}^{n+1}$ with a flat side, as described in the introduction.  Locally, this flow is given by a flow of graphs $x_{n+1}=u(x,t)$ for $x=(x_1, \ldots, x_n)$ by the equation
	\begin{equation} \label{ueqn}
		u_t = \frac{(\det D^2 u)^{\alpha}}{(1+|Du|^2)^{\frac{(n+2)\alpha-1}{2}}}.
	\end{equation}
	As in the introduction, we write	$$g =\left(  \frac{\sigma +1}{\sigma} u \right)^{\frac{\sigma}{\sigma+1}}, \quad \sigma:= n-\frac{1}{\alpha}.$$
	Then compute
	$$u_t = g^{\frac{1}{\sigma}} g_t, \quad u_i = g^{\frac{1}{\sigma}} g_i,$$
	$$u_{ij} = g^{\frac{1}{\sigma}} g_{ij} + \frac{1}{\sigma} g^{\frac{1}{\sigma}-1} g_i g_j.$$
One can check then that $g$ solves
\begin{equation} \label{eqnGCF}
g_t = F(\nabla^2 g, \nabla g, g),
\end{equation}
for
\begin{equation} \label{eqnGCF2}
F(\nabla^2 g, \nabla g, g)=\frac{\left(  \det (g^{\frac{1}{n}} g_{ij} + \frac{1}{\sigma} g^{\frac{1}{n}-1} g_i g_j) \right)^{\alpha}}{(1+g^{\frac{2}{\sigma}} |\nabla g|^2)^{\frac{(n+2)\alpha-1}{2}}},
\end{equation}
noting that this is of course different from the operator $F$ in (\ref{gF}), even though we use the same letter. 

From this formula,
 $F=F( (A_{ij}), p_i, u)$ 
	 is smooth with $\frac{\partial F}{\partial A_{ij}} \ge 0$ for $x \in \ov{W}_0\setminus \Sigma^t_1$ and  $\frac{\partial F}{\partial A_{ij}} > 0$ on the open set $\ov{W}_0\setminus \ov{\Sigma}^t_1$.   The condition  $2/\sigma\in \mathbb{Z}^+$ is used here to ensure the smoothness of $F$ when derivatives land on the denominator.	 We have the following degeneracy of the operator at points $x$ of $\partial \Sigma_1^t$, for $\nu_t$ the outward pointing unit normal at $x$,
	 \begin{equation} \label{degn}
\nu_t^i \frac{\partial F}{\partial A_{ij}} = 0, \quad i=1, \ldots, n.
	  \end{equation}
	  This will be proved below in Section \ref{SectionGCF}.

We suppose that for small $T>0$, we have a family of domains $\Sigma_1^t \subset \mathbb{R}^n$ for $t \in [0,T]$ with smooth boundaries $\partial \Sigma_1^t$ which vary smoothly with $t$.  Using the terminology of the introduction, let $g(x,t)$ be smooth on $\cup_{t\in [0,T]} (\ov{W}_0 \setminus \Sigma_1^t) \times \{t\}$ solving (\ref{eqnGCF}).

The initial data $g_0$ is defined on $\ov{W}_0 \setminus \Sigma_1$.  In this section, we will write $\Omega_0:= \Sigma_1$, a convex domain in $\mathbb{R}^n$ with smooth boundary $\partial \Omega_0$.
	Let
	$v: \ov{\Omega}_0 \rightarrow [0,\infty)$
	be a fixed smooth concave function on the convex set $\ov{\Omega}_0$, which vanishes on $\partial \Omega_0$ and is positive on $\Omega_0$ and such that the 
	nondegeneracy condition 
	\begin{equation} \label{nondegen}
		v + |\nabla v|^2 \ge c>0, \quad \textrm{on } \ov{\Omega}_0,
	\end{equation}
	holds for a uniform constant $c>0$.  Equivalently, we have $|\nabla v| >0$ on $\partial \Omega_0$.

  	As in the case of the $p$-Laplacian evolution equation above, we define a vector-valued map $V$ by (\ref{V}).  For points $x \in \ov{\Omega}_0$ close enough to the boundary, the line segment $h \mapsto (x, v(x))+ hV(x)$ intersects the graph $x_{n+1} = g(x,t)$ at a unique value $h$, defining $h=h(x,t)$ which satisfies (\ref{h0}).  The difference here is that the graph $x_{n+1}=g(x,t)$ is defined for $x$ in the \emph{exterior} of $\Omega_0$.  In particular, we  no longer have $h(x,0)=0$ for all $x \in \ov{\Omega}_0$, although we do have $h(x,0)=0$ for $x\in \partial \Omega_0$.  We still have that $h(x,t)$ for $t \in [0,T]$ satisfies a second order nonlinear equation 
	\begin{equation} \label{dhdt2}
	\frac{\partial h}{\partial t} = G(D^2h, Dh, h,x),
	\end{equation}
	for $x \in \ov{\Omega}_0 \setminus \Omega_0^{(\eta)}$ for a small $\eta>0$.  Of course, the operator $G$ here differs from the operator described in Section \ref{subsectionplap}, even though we use the same letter.
	
	\subsection{Computing the linearization} In order to apply Theorem \ref{CWthm}, we need to compute the linearization of the equations (\ref{dhdt1}) and (\ref{dhdt2}) at a function $h = \ov{h} + \zeta$ for $\zeta \in \mathcal{D}$ (using the terminology of Section \ref{SectionCW} above).
	
	 The variation $\delta h$ of $h$ will satisfy a linear equation
	$$(\delta h)_t = a^{ij} (\delta h)_{ij} + b^i (\delta h)_{i} + f \delta h,$$
	for coefficients $a^{ij}, b^i, f$ depending on $x$ and $t$.
	
	The main result of this section is the following theorem relating these coefficients at a boundary point $x_0 \in \partial \Omega_0$ and $t=0$ to quantities involving the corresponding function $g$ from \eqref{h0}.  We do this in a general way which applies to both the $p$-Laplacian evolution equation \emph{and} the $\alpha$-Gauss curvature flow.  
	
	We choose local coordinates centered at $x_0$ so that the outward unit normal to $\Omega_0$ at $x_0$ is given by $\nu= (0, \ldots, 0, 1)$.  We also assume that locally $\partial \Omega_0$ is given by $x_n = \rho(x')$ for $x'=(x_1, \ldots, x_{n-1})$, where $\rho$ is a smooth function with $\rho(0)=0$ and $\nabla \rho(0)=0$.	
	
	\begin{theorem} \label{thmtechnical} At $x_0$ we have 
	\begin{enumerate}
	\item[(i)] $a^{nn} = \frac{1}{g_n} \frac{\partial F}{\partial A_{ij}} \Psi^n_j \Psi^n_i \Psi^n_n v_n (1+h)$ for $t \in [0,T]$.
	\end{enumerate}
	At  $x_0$ and $t=0$, we have:
	\begin{enumerate}
	\item[(ii)] $\displaystyle{\partial_n a^{nn} = \left( \partial_n \frac{\partial F}{\partial A_{nn}} \right) (\Psi^n_n)^2}$.
	\item[(iii)] $\displaystyle{b^n = \sum_{i,j=1}^{n-1}  \frac{\partial F}{\partial A_{ij}} \Psi^n_n v_n h_{ij} + \frac{\partial F}{\partial p_n}  \Psi^n_n - \frac{F}{v_n}}$.
	\item[(iv)] Finally, 
	\[ \begin{split} b^n - \sum_{i=1}^n \partial_i a^{ni} = {} &   \sum_{i,j=1}^{n-1}  \frac{\partial F}{\partial A_{ij}}  \Psi^n_n v_n h_{ij} + \frac{\partial F}{\partial p_n} \Psi^n_n - \frac{F}{v_n}  \\ {} & - 2\sum_{i=1}^{n-1} \left( \partial_i \frac{\partial F}{\partial A_{in}} \right) \Psi^n_n - \left( \partial_n \frac{\partial F}{\partial A_{nn}} \right) (\Psi^n_n)^2. \end{split} \]
	\end{enumerate}
	\end{theorem}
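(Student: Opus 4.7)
The plan is to derive an explicit form of $G$ from the defining relation $(1+h)v = g(\Phi, t)$, linearize by varying $h$, and extract the claimed formulas for the coefficients at $(x_0, 0)$ using the boundary conditions $v(x_0)=0$, $v_A(x_0) = v_n \delta^n_A$, together with the degeneracy conditions (\ref{deg}) or (\ref{degn}) on $F$. Differentiating the defining relation in $t$, using $\Phi^k_t = -h_t v_k$, yields $h_t \tilde{W} = F(g_{AB}(\Phi), g_A(\Phi), g(\Phi))$, where $\tilde{W}(x, t) := v(x) + g_A(\Phi, t) v_A(x)$, so $G = F/\tilde{W}$. To linearize, I vary $h \to h + \epsilon \delta h$; the same implicit differentiation of the defining relation produces $\delta g(y, t) = \delta h(\Psi(y, t)) M(y, t)$, where $M(y, t) := \tilde{W}(\Psi(y, t), t)$. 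Differentiating this identity in $y$ gives explicit formulas for $\delta g_A$ and $\delta g_{AB}$; combining with $\delta \Phi^k = -\delta h \, v_k$ in the chain rule for $\delta[g_{AB}(\Phi, t)]$ and $\delta[g_A(\Phi, t)]$ yields $\delta G$, from which one reads off $a^{kl} = \frac{\partial F}{\partial A_{AB}} \Psi^k_A \Psi^l_B$ as the coefficient of $(\delta h)_{kl}$.

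For (i), differentiating $g(y, t) = (1 + h(\Psi, t))v(\Psi)$ in $y^A$, evaluating at $y = \Phi(x_0, t)$, and using $v(x_0)=0$, $v_k(x_0) = v_n \delta^n_k$, gives $g_A(\Phi, t) = (1+h) v_n \Psi^n_A$ at $x_0$; specializing $A = n$ rewrites $a^{nn}$ into the stated form. For (ii), at $t = 0$ we have $\Phi(x_0, 0) = x_0$, and solving the tangential derivatives of the defining relation at $x_0$ forces $h_B(x_0, 0) = 0$ for $B < n$, so $\Psi^n_A = \Psi^n_n \delta^n_A$ at $(x_0, 0)$. Differentiating $a^{nn}$ in $x^n$, each cross term carries either a factor $\Psi^n_A$ with $A < n$ (which vanishes) or a factor $\frac{\partial F}{\partial A_{nj}}$ (which vanishes by (\ref{deg}) or (\ref{degn})), leaving only $(\partial_n \frac{\partial F}{\partial A_{nn}})(\Psi^n_n)^2$.

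For (iii), I enumerate the contributions to $b^n$ from the three summands of $\delta G$. The two boundary simplifications $\Psi^n_A = \Psi^n_n \delta^n_A$ and $\frac{\partial F}{\partial A_{nj}} = 0$ eliminate most pieces; the surviving part of $\frac{\partial F}{\partial A_{AB}} \delta g_{AB}(\Phi)$ is the $(\delta h)_n$-coefficient for $A, B < n$, and for this I need $\Psi^n_{AB}$ at $(x_0, 0)$. A short calculation based on differentiating $\Phi^A_B(\Psi(y, t), t) \Psi^B_C(y, t) = \delta^A_C$ once more, which gives $\Psi^G_{CD} = -\Psi^G_A \Phi^A_{BE}(\Psi) \Psi^E_D \Psi^B_C$, combined with $\Phi^n_{AB}(x_0, 0) = -h_{AB} v_n$ for $A, B < n$, yields $\Psi^n_{AB} = \Psi^n_n v_n h_{AB}$. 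The remaining two summands produce $\frac{\partial F}{\partial p_n} \Psi^n_n$ directly and $-F/v_n$ after using $\tilde{W} = g_n v_n$ at $x_0$ together with (i). For (iv), I differentiate $a^{ni} = \frac{\partial F}{\partial A_{AB}} \Psi^n_A \Psi^i_B$ in $x^i$ and sum over $i$; the chain rule combined with the matrix identity $\sum_i \Phi^k_i \Psi^i_B = \delta^k_B$ collapses the $\Psi$-derivative terms, and assembling the result with (iii) produces the stated formula.

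The main obstacle is the careful bookkeeping: one must distinguish $y$-derivatives of $\Psi$ from $x$-derivatives of $a^{ni}$, track the chain rule through $\Phi$, and apply the inverse-function identities for both first and second derivatives of $\Psi$. Once the right identities are assembled, the two boundary facts $\Psi^n_A = \Psi^n_n \delta^n_A$ at $(x_0, 0)$ and $\frac{\partial F}{\partial A_{nj}} = 0$ on $\partial \Omega_0$ do essentially all of the simplification work.
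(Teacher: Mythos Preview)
Your approach is correct and is essentially the same linearization computation as the paper's, but organized a bit differently.  The paper varies $\Phi$, $\Psi$ and $\Psi^k_{ij}$ directly (see the long display for $\delta\Psi^k_{ij}$), substitutes into the formula for $g_{ij}$, and tracks only the terms relevant to $a^{nn}$, $a^{ni}$, $b^n$ using an ``$\approx$'' bookkeeping device that discards coefficients vanishing to sufficiently high order at $(x_0,0)$.  You instead first derive the clean scalar identity $\delta g = (\delta h\circ\Psi)\,M$ with $M=\tilde W\circ\Psi$, then differentiate twice in the $y$ variable; because the factor $M(\Phi(x))=\tilde W(x)$ cancels against the $1/\tilde W$ in $G=F/\tilde W$, this yields the closed form $a^{kl}=\frac{\partial F}{\partial A_{AB}}\,\Psi^k_A\Psi^l_B$ valid at \emph{every} point, from which (i) and (ii) are nearly immediate and (iv) follows by one more differentiation.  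The trade-off is that for (iii) you still need $\Psi^n_{AB}$ for $A,B<n$, obtained from the second-order inverse-function identity, which is exactly where your computation converges with the paper's.  Both routes rest on the same two boundary facts---the diagonality $\Psi^n_A=\Psi^n_n\delta^n_A$ at $(x_0,0)$ and the degeneracy $\frac{\partial F}{\partial A_{nj}}=0$---and produce the same conclusions; your packaging simply avoids the explicit (and lengthy) computation of $\delta\Psi^k_{ij}$.
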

	\begin{proof}
Recall equation (\ref{h0}), 
	\begin{equation} \label{h1}
	g(\Phi, t) = v(1+h).
	\end{equation}
	From the definition of $\Phi$ and $\Psi$, we have
$\Psi^k_i \Phi^i_{\ell} = \delta_{k\ell} = \Phi^k_i \Psi^i_{\ell}$,
where
\begin{equation} \label{Phi0}
\Phi^i_{\ell} = \delta_{i \ell} - h_{\ell} v_i - h v_{i \ell}.
\end{equation}

	Differentiating (\ref{h1}),
	\begin{equation} \label{gi0}
	\begin{split} 
	g_t =  {} & h_t (v+ g_i v_i) \\
	g_i = {} & \Psi^k_i (v_k (1+h)+ v h_k) \\
	g_{ij} = {} & \Psi^k_{ij} (v_k (1+h) + v h_k) + \Psi^{\ell}_j \Psi^k_i (v_{k\ell} (1+h) + v_k h_{\ell} + v_{\ell} h_k + v h_{k\ell}) 
	\end{split}
	\end{equation}
	where we are always evaluating $g$, $\Psi$  and their derivatives at $\Phi(x,t)$. 
	
	It follows from the above that one can obtain the operator $G$ acting on $h$ by
	\begin{equation} \label{GF}
	G(D^2h, Dh, h, x)= \frac{F(D^2 g, Dg, g)}{v+ g_i v_i}.
	\end{equation}
	
	Regarding $\Phi$, $\Psi$, $g$ and its derivatives as functions of $h$ we wish to compute their variations $\delta \Phi$, $\delta \Psi$ and $\delta g$ at the point $x_0$.  Recall that at $x_0$ we have $v=g=0$  and we use coordinates so that $\nu = (0, \ldots, 0,1)$ so that $g_i=v_i=0$ for $i=1, \ldots, n-1$.  
	Since we are only interested in those terms which involve $(\delta h)_{in}$ for $i=1, \ldots, n$ and $(\delta h)_n$, we will neglect terms involving $\delta h$, $(\delta h)_a$ or $(\delta h)_{ab}$ for $1 \le a, b \le n-1$.  In addition, we neglect any terms involving only $(\delta h)_i$ whose coefficients vanish at $x_0$ or terms $(\delta h)_{ij}$ whose coefficients vanish to order at least two.  We use $\approx$ to indicate that we are neglecting these terms.

From  (\ref{Phi0}),
\begin{equation}
\delta \Phi^i_{\ell} \approx - (\delta h)_{\ell} v_i.
\end{equation}

Compute
\begin{equation} \label{Psiki}
\delta \Psi^k_i \approx \Psi^j_i \Psi^k_{\ell}  (\delta h)_j v_{\ell}
\end{equation}
and
\begin{equation} \label{and}
\Psi^k_{ij} = \Psi^q_j \Psi^{\ell}_i \Psi^k_p (h_{\ell q} v_p + h_{\ell} v_{pq} + h_q v_{p\ell} + h v_{pq\ell}).
\end{equation}
From 
(\ref{GF}), 
\begin{equation} \label{deltaht}
\begin{split}
\lefteqn{a^{ij} (\delta h)_{ij} + b^i (\delta h)_{i} + f \delta h} \\ 
= {} & \frac{1}{(v+g_iv_i)} \left(\frac{\partial F}{\partial A_{ij}} \delta g_{ij} + \frac{\partial F}{\partial p_i} \delta g_i + \frac{\partial F}{\partial u} \delta g \right)  - \frac{F v_i \delta g_i}{(v+g_i v_i)^2}.
\end{split}
\end{equation}
We can now establish (i).  From (\ref{gi0}),  (\ref{and}) and (\ref{deltaht}), the only contribution to $a^{nn}$ comes from $\delta \Psi^k_{ij}$, where the variation lands on $h_{\ell q}$, giving at $x_0$,
\begin{equation} \label{formulaann}
a^{nn} = \frac{1}{g_n} \frac{\partial F}{\partial A_{ij}} \Psi^n_j \Psi^n_i \Psi^n_n v_n (1+h),
\end{equation}
as required.

From now on, we work at the point $x_0$ at time $t=0$.  Then we have $h=0$ and $h_i=0$ for $i=1, \ldots, n-1$.  Moreover, $\Phi^i_{\ell}$ and $\Psi^k_i$ are diagonal and $\Psi^a_b = \Phi^a_b=\delta_{ab}$ for  $1\le a, b \le n-1$.  We also have $g_n = \Psi^n_n v_n$ and (\ref{formulaann}) becomes
\begin{equation} \label{formulaann2}
a^{nn} = \frac{\partial F}{\partial A_{nn}}  (\Psi^n_n)^2.
\end{equation}
 
Compute
\begin{equation} \label{Psikij}
\begin{split}
\delta \Psi^k_{ij} \approx {} &  \Psi^q_j \Psi^{\ell}_i \Psi^k_p ((\delta h)_{\ell q} v_p + (\delta h)_{\ell} v_{pq} + (\delta h)_q v_{p\ell})  \\
{} &+ (\Psi^r_j \Psi^q_s  \Psi^{\ell}_i \Psi^k_p + \Psi^q_j \Psi^r_i \Psi^{\ell}_s \Psi^k_p + \Psi^q_j \Psi^{\ell}_i \Psi^r_p \Psi^k_s  ) (\delta h)_r v_s (h_{\ell q} v_p + h_{\ell} v_{pq} + h_q v_{p\ell}) \\
\approx {} &  \Psi^q_j \Psi^{\ell}_i \Psi^k_n (\delta h)_{\ell q} v_n  +  \Psi^q_j \Psi^{n}_i \Psi^k_p (\delta h)_n v_{pq} + \Psi^n_j \Psi^{\ell}_i \Psi^k_p(\delta h)_n v_{p\ell}  \\
{} &+ \bigg(  \Psi^n_j \Psi^q_n \Psi^{\ell}_i \Psi^k_p +  \Psi^q_j \Psi^n_i \Psi^{\ell}_n \Psi^k_p + \Psi^q_j \Psi^{\ell}_i \Psi^n_p \Psi^k_n    \bigg) (\delta h)_n  v_n (h_{\ell q} v_p + h_{\ell} v_{pq} + h_q v_{p\ell}).
\end{split}
\end{equation}
Observe in particular that if $1 \le i,j \le n-1$ then
$$\delta \Psi^k_{ij} \approx \delta^k_n (\Psi_n^n)^2 (\delta h)_n (v_n)^2 h_{ij},$$
so that from (\ref{gi0}),
\begin{equation} \label{deltagij0}
\begin{split}
\delta g_{ij} \approx {} &  (\Psi_n^n)^2 (\delta h)_n (v_n)^3 h_{ij}, \quad \textrm{for } 1 \le i, j \le n-1, \\
\delta g_i \approx {} & \Psi^j_i \Psi^k_{\ell} (\delta h)_j v_{\ell} v_k, \quad \textrm{for } 1 \le i \le n.
\end{split} 
\end{equation}
Hence,
\begin{equation*}
\begin{split}
\frac{\partial F}{\partial A_{ij}} \delta g_{ij} \approx {} & \sum_{i,j=1}^{n-1} \frac{\partial F}{\partial A_{ij}} \delta g_{ij} +  2\sum_{i=1}^{n-1} \frac{\partial F}{\partial A_{in}} \delta g_{in} +  \frac{\partial F}{\partial A_{nn}} \delta g_{nn} \\
\approx {} & \sum_{i,j=1}^{n-1}  \frac{\partial F}{\partial A_{ij}} (\Psi^n_n)^2 (v_n)^3 h_{ij} (\delta h)_n 
+ 2 \sum_{i=1}^{n-1} \frac{\partial F}{\partial A_{in}}  (\Psi^n_n)^2 v_n^2 (\delta h)_{in}  \\ {} & + \frac{\partial F}{\partial A_{nn}}  (\Psi^n_n)^3 v_n^2 (\delta h)_{nn},
\end{split}
\end{equation*}
using  (\ref{gi0}), (\ref{Psiki}), (\ref{Psikij}) and (\ref{deltagij0}).

We now put this all together, noting that at $x_0$ and $t=0$ we have $g_n=\Psi^n_nv_n$ and so $g_iv_i = \Psi^n_n v_n^2$.  From (\ref{deltaht}),
\begin{equation*}
\begin{split}
\lefteqn{a^{ij} (\delta h)_{ij} + b^i (\delta h)_{i} + f \delta h} \\ \approx {} &  \frac{1}{\Psi^n_n v_n^2} \bigg( \sum_{i,j=1}^{n-1} \frac{\partial F}{\partial A_{ij}} (\Psi^n_n)^2 (v_n)^3 h_{ij} (\delta h)_n + 2 \sum_{i=1}^{n-1} \frac{\partial F}{\partial A_{in}} (\Psi^n_n)^2 v_n^2 (\delta h)_{in} \\ {} &  + \frac{\partial F}{\partial A_{nn}} (\Psi_n^n)^3 v_n^2 (\delta h)_{nn}  + \frac{\partial F}{\partial p_n} (\Psi^n_n)^2 v_n^2 (\delta h)_n \bigg)   - \frac{F v_n (\Psi^n_n)^2 v_n^2 (\delta h)_n}{(\Psi_n^n v_n^2)^2} \\
\approx {} & \sum_{i,j=1}^{n-1}  \frac{\partial F}{\partial A_{ij}}  \Psi^n_n v_n h_{ij} (\delta h)_n + 2\sum_{i=1}^{n-1}  \frac{\partial F}{\partial A_{in}} \Psi^n_n (\delta h)_{in} + \frac{\partial F}{\partial A_{nn}} (\Psi_n^n)^2 (\delta h)_{nn}  \\ {} & + \frac{\partial F}{\partial p_n} \Psi^n_n (\delta h)_n - \frac{F}{v_n}  (\delta h)_n.
 \end{split}
\end{equation*}
The theorem follows immediately from this.
	\end{proof}

		\section{The $p$-Laplacian evolution equation} \label{sectionp}
	
	In this section, we give the proof of Theorem \ref{thmplap}.  We wish to construct a smooth function $g(x,t)$ on a neighborhood of the boundary $\partial \Omega_t$ of the moving sets $\ov{\Omega}_t$, solving  the nonlinear equation  
		$$\frac{\partial g}{\partial t}=F(\nabla^2 g, \nabla g, g) := \left( \frac{p-2}{p-1} \right) g \left( | \nabla g|^{p-2} \Delta g + (p-2) |\nabla g|^{p-4}g_ig_j g_{ij} \right)  + |\nabla g|^p.$$
As in Section \ref{Sectionlinearization}, we transform this equation to one on a neighborhood of the boundary $\partial \Omega_0$ in the fixed set $\ov{\Omega}_0$.

	  In order to apply Theorem \ref{CWthm}, we need to check the conditions on the coefficients $a^{ij}$, $b$ at points of $\partial \Omega_0$.  We work at a fixed point $x_0 \in \partial \Omega_0$.  Recall that we are linearizing (\ref{dhdt1}) at a function $h=\overline{h}+\zeta$ for $\zeta \in \mathcal{D}$ as described in \S2 while $g$ is the corresponding function defined by \eqref{h0}.
	  From part (i) of Theorem \ref{thmtechnical}, we immediately obtain $a^{ij} \nu_i \nu_j=0$ from (\ref{deg}), giving (A).
	  
	  To establish  (B) we
	  work now at $t=0$, using the same coordinates centered at $x_0$ and notation of the previous section.   We note that since $v=g_0$ here, we have at this point $\Psi_i^j=\delta_{ij} = \Phi^i_j$ and $g_n=v_n<0$.  First note that 	\begin{equation} \label{le1}
	\begin{split}
\partial_n \frac{\partial F}{\partial A_{nn}}  
	= {} & \left( \frac{p-2}{p-1} \right) g_n   ( |g_n|^{p-2} + (p-2) |g_n|^{p-2} ) \\
	= {} & (p-2) g_n |g_n|^{p-2},
	\end{split}
	\end{equation}
using Theorem \ref{thmtechnical}.

	By part (iv) of Theorem \ref{thmtechnical} and (\ref{le1}),
	\begin{equation} \label{le3}
	\begin{split}
	(b^i - \sum_{i=1}^n \partial_i a^{ji}) \nu_j 	= {} & \frac{\partial F}{\partial p_n}  - \frac{F}{v_n} - 2\sum_{i=1}^{n-1} \left( \partial_i \frac{\partial F}{\partial A_{in}} \right)  - \left( \partial_n \frac{\partial F}{\partial A_{nn}} \right)  \\
	= {} & p |g_n|^{p-2}g_n  - \frac{|g_n|^p}{v_n}  - (p-2) |g_n|^{p-2} g_n  \\
	= {} & |g_n|^{p-2}g_n  <0,
	\end{split}
	\end{equation}
	where we used the fact that $\partial_i \frac{\partial F}{\partial A_{in}}=0$ for $i=1, \ldots, n-1$ and (\ref{le1}).

Since we have strict inequality in (\ref{le3}), by continuity this holds for $t$ in $[0,T]$, if $T>0$ is sufficiently small, giving  (B).

We can now complete the proof of Theorem \ref{thmplap}.  We would like to extend our operator $G$ into the interior of $\Omega_0$.  As noted in the introduction, the operator $F$ is degenerate not just along the moving boundary, but also at points in the interior where $\nabla g=0$  vanishes.  To get around this problem, we make a change to the operator in the interior as follows.

Under the correspondence (\ref{h0}) of Section \ref{Sectionlinearization}, $g$ maps to a continuous function $h$ on $(\ov{\Omega} \setminus \Omega_0^{(\eta)}) \times [0,T]$ for a small $\eta>0$.  Moreover, $h$ is smooth on $(\Omega_0 \setminus \Omega_0^{(\eta)}) \times [0,T]$ and satisfies
 $$\frac{\partial h}{\partial t} = G(D^2 h, Dh, h, x),$$
 there, with $h=0$ at $t=0$.
We now extend $h$ over the set $\ov{\Omega}_0^{(\eta)} \times[0,T]$ to obtain a function $\tilde{h}$ with the following properties:  $\tilde{h}$ is continuous on $\ov{\Omega}_0  \times [0,T]$ and smooth on $\Omega_0  \times [0,T]$; $\tilde{h}=h$ on $(\ov{\Omega}_0 \setminus \Omega_0^{(\eta)}) \times [0,T]$; and $\tilde{h}(x,0)=0$ for $x \in \ov{\Omega}_0$.

We also extend the operator $G$ to be a smooth operator $\tilde{G}$ on for $x \in \ov{\Omega}_0$ which is uniformly elliptic on $\ov{\Omega}_0^{(\eta)}$ and has $G=\tilde{G}$ on $\ov{\Omega}_0 \setminus \Omega_0^{(\eta)}$.  
  Let $\psi$ be a smooth cut-off function which is equal to $1$ on $\ov{\Omega}_0^{(\eta)}$ and equal to $0$ on $\ov{\Omega}_0 \setminus \Omega_0^{(\eta/2)}$.  Define an operator
 $$\hat{G}(D^2 h, Dh, h, x):=
\tilde{G}(D^2 h, Dh, h,x) + \psi \tilde{f},$$
where $\tilde{f}$ is the function $$\tilde{f}:=\frac{\partial \tilde{h}}{\partial t} - \tilde{G}(D^2 \tilde{h}, D\tilde{h}, \tilde{h},x).$$

Then $\hat{G}$ is a smooth operator, uniformly elliptic in $\Omega_0^{(\eta/2)}$ and coincides with $G$ in $\ov{\Omega}_0 \setminus \Omega_0^{(\eta/2)}$.  From what we have proved above, $\hat{G}$ satisfies all the conditions of Theorem \ref{CWthm}.  Applying this theorem we obtain $\ve>0$ and a smooth solution $\hat{h} \in C^{\infty}(\ov{\Omega}_0 \times [0,\ve])$ of the equation
\begin{equation} \label{hat}
\begin{split}
\frac{\partial \hat{h}}{\partial t} = {} & \hat{G}(D^2 \hat{h}, D\hat{h}, \hat{h}, x) \\
\hat{h}(x,0) = {} & 0, \quad x \in \ov{\Omega}_0.
\end{split}
\end{equation}
On the other hand we observe that $\tilde{h}$, which  is smooth on $\Omega_0 \times [0,\ve]$ (shrinking $\ve>0$ if necessary), solves (\ref{hat}) there by the construction of $\hat{g}$ and $\tilde{h}$.  We claim that $\hat{h}=\tilde{h}$.  Indeed, via (\ref{h0}) of Section \ref{Sectionlinearization}, $\hat{h}$ and $\tilde{h}$ correspond to functions $\hat{g}$ and $\tilde{g}$ which satisfy an equation which coincides with (\ref{plapg}) near boundaries of $\Omega_t$ in $\mathbb{R}^n$ and satisfy a nondegenerate parabolic equation away from the boundary.  The same argument for uniqueness of continuous solutions of the $p$-Laplacian evolution equation \cite{DBH} gives uniqueness for this modified equation.  This proves the claim that $\hat{h} =\tilde{h}$.  But $\tilde{h}=h$ is smooth near the boundary of $\Omega_0$.  Since $h$ corresponds to $g$ near the boundary, the boundaries $\partial \Omega_t$ are smooth and $g(x,t)$ is smooth function in a neighborhood of the boundary $\cup_{t\in [0,T]} \partial \Omega_t \times \{ t \}$, as required.

	\section{The  $\alpha$-Gauss curvature flow with a flat side} \label{SectionGCF}
	
	In this section we prove Theorem \ref{thmGCF}, by using the same strategy as in the last section now to the case of the $\alpha$-Gauss curvature flow.  
	
	We first prove (A) that $a^{ij}\nu_i \nu_j=0$ at every  $x_0 \in \partial \Omega_0$ for $t\in [0,T]$.   Recall that to apply Theorem  \ref{CWthm} we linearize the operator $G$ at a function $h= \ov{h} + \zeta$ for $\zeta \in \mathcal{D}$.  The function $h$ defines a family of diffeomorphisms $\Phi_t(x)=\Phi(x,t)$ and functions $g(y,t)$ on domains $\Phi_t (\ov{\Omega}_0 \setminus \Omega_0^{(\eta)}) \subset \mathbb{R}^n$.
	 By (i) of Theorem \ref{thmtechnical} and (\ref{gi0}), it suffices to show that at this $g$ we have
	\begin{equation} \label{nut}
	\nu_t^i \frac{\partial F}{\partial A_{ij}}=0,
	\end{equation}
	where $\nu_t$ is the outward pointing unit normal to $\Phi_t (\ov{\Omega}_0 \setminus \Omega_0^{(\eta)})$ at $\Phi(x_0,t)$. 
	
	 We can choose coordinates centered at $\Phi(x_0,t)$ so that the  boundary $\Phi_t (\partial \Omega_0)$ is given locally by $y_n = \rho(y')$, for $y'=(y_1, \ldots, y_{n-1})$, with $\rho(0)=0$ and $\nabla \rho(0) =0$.  We may assume after a rotation that $(\rho_{ij})_{i,j=1}^{n-1}$ is diagonal at this point.  Since $g(y', \rho(y'))=0$ it follows that $g_i=0$ for $i=1, \ldots, n-1$ and $g_{ij}=0$ for $i,j=1, \ldots, n-1$ with $i\neq j$ at this point.

Write $B=(B_{ij})$ for the $n \times n$ matrix with entries
$$B_{ij} =  g^{\frac{1}{n}} g_{ij} + \frac{1}{\sigma} g^{\frac{1}{n}-1} g_i g_j,$$
so that from (\ref{eqnGCF2}),
$$F(\nabla^2g, \nabla g, g)= \frac{(\det B)^{\alpha}}{(1+ g^{\frac{2}{\sigma}} |\nabla g|^2)^{\frac{(n+2)\alpha -1}{2}}}.$$
To compute the determinant of $B$, it is helpful to recall that, writing $B' = (B_{ij})_{i,j=1}^{n-1}$, and $Y = (B_{n1}, \ldots, B_{n,n-1})$, we have $$\det B = (B_{nn} - Y (B')^{-1} Y^T) \det B'.$$ A short calculation gives \[
\begin{split}
\det B = {} & g g_{11}\cdots g_{nn} + \frac{1}{\sigma} g_{11} \cdots g_{n-1, n-1} g_n^2 \\
{} & -  \frac{1}{\sigma}\sum_{i=1}^{n-1} g_{11} \cdots \widehat{g_{ii}} \cdots g_{n-1,n-1}  (g_{in}+ g_{ni}) g_i g_n  + E(\nabla^2g, \nabla g, g),
\end{split}
\]
where the ``error term'' $E$ has the property that at $\Phi(x_0, t)$,
$$0 = E= \partial_n \frac{\partial E}{\partial A_{nn}} = \frac{\partial E}{\partial p_n} = \frac{\partial E}{\partial A_{ii}} = \partial_i \frac{\partial E}{\partial A_{in}},$$
for $i=1, \ldots, n-1$.  In particular, terms involving $B_{ij}$ for $i\neq j$ and $i,j=1, \ldots, n-1$ are all absorbed in $E$. It follows immediately that $\frac{\partial F}{\partial A_{nn}}=0$, which gives (\ref{nut}) by the Cauchy-Schwarz inequality.

To establish (B) we work at time $t=0$. At $x_0$ and $t=0$ we choose coordinates $x_1, \ldots x_n$ centered at $x_0$ so that the boundary of $\Omega_0$ is given locally by $x_n = \rho(x')$ with $\rho(0)=0$ and $\nabla \rho(0)=0$.   Since $\Omega_0$ is strongly convex for $t$ small, we have $(D^2\rho)(0) <0$, and we also assume that $(\rho_{ij})_{i,j=1}^{n-1}$ is diagonal at this point.   Since $h$ and $v$ vanish on $\partial \Omega_0$ we have $h=0=v$ and $h_i=0=v_i$ for $i=1, \ldots, n-1$, and $v_n, h_n<0$.     Moreover, 
\begin{equation} \label{hij3}
h_{ij} = - \rho_{ij} h_n,  \quad v_{ij} = - \rho_{ij}v_n, \quad  i,j=1, \ldots, n-1.
\end{equation}
In particular, the matrices $(h_{ij})_{i,j=1}^{n-1}$ and $(v_{ij})_{i,j=1}^{n-1}$ are diagonal and negative definite.  On the other hand, using these coordinates in the complement of $\Omega_0$ at $t=0$, by construction $g_n>0$ at $x_0$, and since $g$ vanishes along $\partial \Omega_0$,
\begin{equation} \label{rhoij}
g_{ij} = - \rho_{ij} g_n.
\end{equation}
In particular, $(g_{ij})_{i,j=1}^{n-1}$ is diagonal and positive definite.

In what follows, we apply Theorem \ref{thmtechnical} repeatedly.  We have at $x_0$ and $t=0$, 
\[
\begin{split}
\frac{\partial F}{\partial p_n} = {} & \alpha (\det B)^{\alpha -1} g_{11}\cdots g_{n-1, n-1} \frac{2}{\sigma} g_n \\
= {} &  \frac{2\alpha}{\sigma^{\alpha}} (g_{11}\cdots g_{n-1, n-1})^{\alpha} (g_n^2)^{\alpha-1} g_n. \\
\end{split}
\]
Next, compute for $i=1, \ldots, n-1$,
$$\frac{\partial F}{\partial A_{ii}} = \alpha (\det B)^{\alpha-1} \frac{1}{\sigma} g_{11} \cdots \widehat{g_{ii}} \cdots g_{n-1,n-1} g_n^2.$$
From (\ref{gi0}), we have
$$v_n = \Phi^n_n g_n = (1-h_nv_n) g_n,$$
and hence
$$h_n = \frac{1}{v_n} ( 1- \Phi^n_n).$$
Then from (\ref{hij3}) and (\ref{rhoij}),
$$h_{ii} = g_{ii} (1-\Phi^n_n) \frac{\Phi^n_n}{v_n^2}.$$
Hence, recalling that $v_n<0$ and $\Phi^n_n<0$,
\begin{equation} \label{binui}
\begin{split}
b^n = {} & \sum_{i=1}^{n-1} \frac{\partial F}{\partial A_{ii}}  \Psi^n_n v_n h_{ii}  +\frac{\partial F}{\partial p_n} \Psi^n_n - \frac{F}{v_n} \\
= {} & \sum_{i=1}^{n-1} \frac{\alpha}{\sigma^{\alpha-1}} (g_{11}\cdots g_{n-1,n-1})^{\alpha-1} (g_n^2)^{\alpha-1} \frac{1}{\sigma} g_{11} \cdots g_{n-1,n-1} g_n^2 (1-\Phi^n_n) \frac{\Phi^n_n}{v_n^2} \Psi^n_n v_n \\
{} & +  \frac{2\alpha}{\sigma^{\alpha}} (g_{11}\cdots g_{n-1, n-1})^{\alpha} (g_n^2)^{\alpha-1} (\Psi^n_n)^2 v_n - \frac{\left( \frac{1}{\sigma} g_{11} \cdots g_{n-1,n-1} g_n^2 \right)^{\alpha}}{v_n}  \\
= {} & \frac{(n-1)\alpha}{\sigma^{\alpha}} (g_{11} \cdots g_{n-1,n-1})^{\alpha} (g_n^2)^{\alpha-1} (\Psi^n_n)^2 v_n (1-\Phi^n_n) 
\\ {} &  + \frac{1}{\sigma^{\alpha}} (2 \alpha -1) \left( g_{11} \cdots g_{n-1,n-1}  \right)^{\alpha}  (g_n^2)^{\alpha-1} (\Psi^n_n)^2 v_n \\
< {} & \frac{1}{\sigma^{\alpha}} ((n+1) \alpha -1) \left( g_{11} \cdots g_{n-1,n-1}  \right)^{\alpha}  (g_n^2)^{\alpha-1} (\Psi^n_n)^2 v_n,
\end{split} 
\end{equation}
since $\alpha>\frac{1}{n+1}$, where we discarded the term with $\Phi^n_n$.
	
Next, we have \begin{equation} \label{alphauno}
\begin{split}
 \partial_n \frac{\partial F}{\partial A_{nn}} = {} & \alpha (\det B)^{\alpha-1} g_{11} \cdots g_{n-1, n-1} g_n \Phi_n^n \\
= {} & \frac{\alpha}{\sigma^{\alpha-1}} (g_{11} \cdots g_{n-1,n-1})^{\alpha} (g_n^2)^{\alpha-1} g_n \Phi^n_n.
\end{split}
\end{equation}
and
for each $i=1, \ldots, n-1$,
\[
\begin{split}
\partial_i \frac{\partial F}{\partial A_{in}} = {} & - \alpha (\det B)^{\alpha-1} g_{11}\cdots g_{n-1, n-1} \frac{1}{\sigma} g_n \\
= {} & -\frac{\alpha}{\sigma^{\alpha}} (g_{11} \cdots g_{n-1,n-1})^{\alpha} (g_n^2)^{\alpha-1} \Psi^n_n v_n.
\end{split}
\]
Then, using (\ref{binui}) and (\ref{alphauno}),
\begin{equation} \label{alphatres}
\begin{split}
b^n - \sum_{i=1}^n \partial_i a^{ni} < {} &  \frac{1}{\sigma^{\alpha}} ((n+1) \alpha -1) \left( g_{11} \cdots g_{n-1,n-1}  \right)^{\alpha}  (g_n^2)^{\alpha-1} (\Psi^n_n)^2 v_n \\ {} & - 2\sum_{i=1}^{n-1} \left( \partial_i \frac{\partial F}{\partial A_{in}} \right) \Psi^n_n  - \left( \partial_n \frac{\partial F}{\partial A_{nn}} \right) (\Psi^n_n)^2 \\
= {} & \frac{1}{\sigma^{\alpha}} ((n+1) \alpha -1) \left( g_{11} \cdots g_{n-1,n-1}  \right)^{\alpha}  (g_n^2)^{\alpha-1} (\Psi^n_n)^2 v_n \\ {} &    +  \frac{2 (n-1) \alpha}{\sigma^{\alpha}} (g_{11} \cdots g_{n-1,n-1})^{\alpha} (g_n^2)^{\alpha-1} (\Psi^n_n)^2 v_n \\ {} &  - \frac{\alpha}{\sigma^{\alpha-1}} (g_{11} \cdots g_{n-1,n-1})^{\alpha} (g_n^2)^{\alpha-1}  (\Psi^n_n)^2 v_n \\
= {} & \frac{\alpha(2n-1)}{\sigma^{\alpha}} (g_{11} \cdots g_{n-1,n-1})^{\alpha} (g_n^2)^{\alpha-1}  (\Psi^n_n)^2 v_n<0,
\end{split}
\end{equation}
where for the last equality, we recall that $\sigma = n- \frac{1}{\alpha}$.
	
	We have strict inequality in (\ref{alphatres}) so by continuity this holds for $t \in [0,T]$ if $T>0$ is sufficiently small, giving  (B).  We now apply the same argument as in Section \ref{sectionp} above.  In this case, the uniqueness $\hat{h}=\tilde{h}$ follows from uniqueness of viscosity solutions of degenerate parabolic equations of the type of the $\alpha$-Gauss curvature flow (see \cite[Section 8]{CIL}).

\end{document}